\theoremstyle{plain}
\newtheorem{theorem}{Theorem}[section]
\newtheorem{lemma}[theorem]{Lemma}
\newtheorem{proposition}[theorem]{Proposition}
\newtheorem*{question}{Question}
\newtheorem*{definition}{Definition}
\theoremstyle{definition}
\theoremstyle{remark}
\newtheorem{remark}{Remark}
\newcommand{\T}{\mathcal{T}_K}
\newcommand{\TTj}{\mathcal{T}_{K_j}}
\newcommand{\spinc}{$spin^{\text{c}}$ }
\begin{document}

\baselineskip.5cm
\title {Null-homologous exotic surfaces in 4-manifolds}

\author{Neil R. Hoffman}
\address{Department of Mathematics\newline
\hspace*{.375in} Oklahoma State University \newline
\hspace*{.375in} 401 Mathematical Sciences Building, \newline
\hspace*{.375in} Stillwater, OK 74078-1058}
\email{\rm{neil.r.hoffman@okstate.edu}}

\author[Nathan S. Sunukjian]{Nathan S. Sunukjian}
\address{Department of Mathematics and Statistics \newline
\hspace*{.375in} North Hall \newline
\hspace*{.375in} Calvin College \newline
\hspace*{.375in} 3201 Burton SE \newline
\hspace*{.375in} Grand Rapids, MI 49546}
\email{\rm{nss9@calvin.edu}}

\newcommand{\neil}[1]{{\color{blue}{#1}}}
\newcommand{\nathan}[1]{{\color{red}{#1}}}
\newcommand{\referee}[1]{{\color{cyan}{#1}}}

\begin{abstract}

In this paper we exhibit infinite families of embedded tori in 4-manifolds that are topologically isotopic but smoothly distinct. The interesting thing about these tori is that they are topologically trivial in the sense that each bounds a topologically embedded solid handlebody. This implies that there are stably ribbon surfaces in 4-manifolds that are not ribbon. 
 \end{abstract}
\maketitle

\section{introduction}

Just as a 4-manifold can have many inequivalent smooth structures, there can be many different smooth embeddings of surfaces into a 4-manifold which are topologically isotopic, but smoothly distinct. Any surface which admits more than one smooth embedding in a topological isotopy class will be said to admit \emph{exotic embeddings}. 

In this paper we will show that null-homologous tori first discovered by Fintushel and Stern in their knot surgery construction in fact provide examples of  exotically embedded tori. Specifically, 

\begin{theorem}\label{t:main}
Let $X$ be a smooth 4-manifold with $b_2 \geq |\sigma | + 6$, non-trivial Seiberg-Witten invariant, and embedded torus $T$ of self intersection 0 such that $\pi_1(X \setminus T) = 1$. Then $X$ contains an infinite family of distinct tori $\{T_i\}$ that are topologically isotopic to the unknotted torus (a torus that bounds a solid handlebody in $X$), but no diffeomorphism of $X$ exists taking $T_i$ to $T_j$ if $i\ne j$.
\end{theorem}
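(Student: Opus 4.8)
The plan is to build the family by Fintushel--Stern knot surgery, to detect smooth non-isotopy with Seiberg--Witten invariants, and to trivialize the tori topologically by Freedman--Quinn surgery. Fix an infinite family of knots $K_1,K_2,\dots\subset S^3$ with pairwise distinct, non-trivial Alexander polynomials, say the twist knots. Knot surgery on $T$ with $K_i$ replaces the product neighborhood $\nu(T)\cong T^2\times D^2$ by $(S^3\setminus\nu(K_i))\times S^1$, and inside this regluing region sit the null-homologous tori that Fintushel and Stern isolate. I would take the family $\{T_i\}$ to be re-embeddings of the unknotted torus into the \emph{fixed} smooth manifold $X$, each supported in a neighborhood of $T$ and recording the knot $K_i$, arranged so that one and the same surgery instruction performed on $T_i$ returns the knot-surgered manifold $X_{K_i}$ (the unknot giving back $X$ and the standard unknotted torus $T_0$). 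Each $T_i$ is null-homologous and bounds a topologically embedded solid handlebody.

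The smooth distinctness rests on a single principle: a smooth ambient isotopy carrying $T_i$ to $T_j$ carries the surgery datum on $T_i$ to that on $T_j$, hence produces a diffeomorphism $X_{K_i}\cong X_{K_j}$. I would therefore compute the Seiberg--Witten invariants of the surgered manifolds using the knot-surgery formula
\begin{equation*}
SW_{X_{K_i}} \;=\; SW_X\cdot\Delta_{K_i}(t),\qquad t=\exp\!\big(2[T]\big),
\end{equation*}
which applies precisely because $SW_X\neq 0$ and $\pi_1(X\setminus T)=1$. Pairwise distinct Alexander polynomials make the classes $SW_{X_{K_i}}$ pairwise distinct, so the $X_{K_i}$ are pairwise non-diffeomorphic; by the principle above the tori $T_i$ are pairwise smoothly non-isotopic, and since the unknot contributes $\Delta\equiv1$ none of them is smoothly isotopic to $T_0$.

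For the topological statement I would show that every $T_i$ is topologically isotopic to the standard unknotted torus $T_0$. The local modification producing $T_i$ from $T_0$ records only the knot $K_i$; this obstructs \emph{smooth} triviality but not \emph{topological} triviality, and one extracts from it a topological concordance between $T_i$ and $T_0$. Because $\pi_1(X\setminus T)=1$, and hence $X$ is simply connected, this concordance can be taken to have simply-connected complement. I would then invoke the topological concordance-implies-isotopy theorem for surfaces in this fundamental-group setting (Freedman--Quinn surgery with the five-dimensional $s$-cobordism theorem): the hypothesis $b_2(X)\ge|\sigma(X)|+6$, i.e.\ $\min(b_2^+,b_2^-)\ge 3$, furnishes the stable range that lets the relative $h$-cobordism be a product and upgrades concordance to ambient isotopy. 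In particular each $T_i$, being topologically isotopic to $T_0$, bounds a topologically embedded solid handlebody.

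I expect the topological isotopy to be the main obstacle. Producing smoothly knotted tori is routine; certifying that they are \emph{topologically} unknotted demands the full force of four-dimensional topological surgery, and two points need care: first, arranging the concordance from $T_i$ to $T_0$ to have simply-connected complement so that the surgery hypotheses are genuinely met, and second, using the stable condition $b_2\ge|\sigma|+6$ to pass from topological concordance to honest isotopy. The Seiberg--Witten step is comparatively formal once the construction is in hand; its only delicate ingredient is the verification that a smooth isotopy of the tori really does induce a diffeomorphism of the surgered manifolds, which is what converts the invariant computation into a statement about the embeddings.
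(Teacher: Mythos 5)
Your topological step is fine---it is exactly Theorem \ref{t:iso} (\cite{N}), and your reading of $b_2 \geq |\sigma|+6$ as $\min(b_2^+,b_2^-)\geq 3$ matches how that theorem is used. The gap is in the smooth detection, at the step you yourself call the ``only delicate ingredient,'' and it does not survive scrutiny. A smooth ambient isotopy carrying $T_i$ to $T_j$ gives a diffeomorphism of pairs $(X,T_i)\cong (X,T_j)$, but the surgery instruction is auxiliary data, not part of the embedded torus: the induced diffeomorphism of complements carries your chosen slope on $\partial\nu(T_i)$ to \emph{some} slope on $\partial\nu(T_j)$, with no reason for it to be the chosen one. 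So from an isotopy you may only conclude that $X_{K_i}$ is diffeomorphic to the result of \emph{some} log-transform on $T_j$, not to $X_{K_j}$; pairwise non-diffeomorphic $X_{K_i}$ therefore do not pairwise distinguish the tori. Your choice of twist knots makes this failure concrete rather than hypothetical: all the manifolds $X_{K_n}$ for the twist-knot family arise as $\frac{1}{n}$-log-transforms on a \emph{single} fixed null-homologous torus (this is precisely the cautionary example in Section \ref{s:constructing}, citing \cite{FStori}), and in addition all twist knots have Alexander polynomials of the same degree, which defeats the repair described next.

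The paper closes this gap by replacing ``which manifold does my chosen surgery produce'' with an invariant of the smooth pair $(X,T)$ that sees \emph{all} null-homologous log-transforms at once: $B(X,T)$, the maximal divisibility of a difference of basic classes of $X_T(p,q,r)$ over all $(p,q,r)$ with $[T_{(p,q,r)}]=0$. Well-definedness (finiteness) is not formal---it requires the Morgan--Mrowka--Szab\'o gluing formula \cite{MMS}, which shows only finitely many \spinc structures on $X\setminus\nu T$ can extend to basic classes of any null-homologous log-transform. Since $B$ is manifestly unchanged by diffeomorphisms of pairs, it is a legitimate obstruction to smooth isotopy, and the knot surgery formula gives $B(X,\TT)\geq \deg \Delta_{K_i}$. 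But this is only a lower bound by the \emph{degree}, so pairwise distinct Alexander polynomials are not enough: one needs unknotting number one knots with $\deg\Delta_{K_i}\to\infty$, which is exactly why the paper invokes the Kanenobu--Murakami family of two-bridge knots $C(b,b_1,\ldots,b_k,\pm 2,-b_k,\ldots,-b_1)$ \cite{KM} together with the Burde--Zieschang degree formula \cite{BZ}. Your argument needs both repairs---an invariant attached to the pair rather than to a chosen surgery, and a knot family with unbounded Alexander degree---before the Seiberg--Witten computation distinguishes anything.
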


 The first examples of orientable exotic embeddings come from Fintushel and Stern's ``rim surgery'' technique \cite{FSsurf}. Their surfaces all have simply connected complement. A variation on rim surgery was given by Kim, and Kim-Ruberman which works in the case that the complement has non-trivial fundamental group (\cite{Kim, RK, RK2}). Tom Mark has used Heegaard-Floer homology to show that these constructions are also effective for constructing exotic embeddings of surfaces with negative self intersection (\cite{T}). On the other hand, all of these constructions involve surfaces whose complement has finite first homology, and moreover all of these constructions essentially begin with symplectically embedded surfaces in a symplectic 4-manifold. Such surfaces can never be null-homologous. The significance of our examples is that they are null-homologous and moreover topologically trivial.
 
One of the features of this theorem is that there are numerous tractable examples where the theorem can be applied. For example, any elliptic surface contains such a torus and has non-trivial Seiberg-Witten invariant by virtue of being a symplectic manifold. 

The strategy of proof is as follows: The knot surgery construction of Fintushel and Stern produces an infinite family of exotic smooth structures on a 4-manifold through a series of log-transforms on null-homologous tori. These are the tori we will focus on. Using Seiberg-Witten theory, we will define a gauge theoretic invariant of null-homologous tori to distinguish the tori smoothly. Finally, we will show that all such tori are topologically isotopic by a theorem of the second author:

\begin{theorem}[{\cite[Theorem 7.2]{N}}] \label{t:iso} Let $\Sigma_0$ and $\Sigma_1$ be locally flat embedded surfaces of the same genus in a simply connected 4-manifold $X$. The surfaces are topologically isotopic when $\pi_1(X \setminus \Sigma_i) = \mathbb{Z}$ and $b_2 \geq |\sigma| + 6$.
\end{theorem}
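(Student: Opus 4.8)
The plan is to prove that $\Sigma_0$ and $\Sigma_1$ are topologically \emph{concordant} and then to promote the concordance to an ambient \emph{isotopy} by applying the topological $s$-cobordism theorem to the surface exteriors, exploiting that $\mathbb{Z}$ is a good group in the sense of Freedman--Quinn.

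First I would reduce the problem to producing a concordance. An ambient isotopy forces $[\Sigma_0] = [\Sigma_1]$ and forces the normal Euler numbers to agree, so I would begin by checking these necessary conditions: since $X$ is simply connected and $\pi_1(X\setminus\Sigma_i)=\mathbb{Z}$ (hence $H_1(X\setminus\Sigma_i)=\mathbb{Z}$ generated by the meridian), Alexander/Poincar\'e--Lefschetz duality pins down the homology class and the self-intersection, and the boundaries $\partial\nu(\Sigma_i)$ are homeomorphic $S^1$-bundles over $\Sigma$. With the surfaces homologous, I would build a locally flat concordance $C\cong\Sigma\times I$ properly embedded in $X\times I$ with $\partial C=\Sigma_0\times\{0\}\sqcup\Sigma_1\times\{1\}$. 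Concretely, I would first realize the homology between the surfaces by an embedded bordism in $X\times I$ and then use embedded topological surgery on its middle-dimensional classes to convert it into the product $\Sigma\times I$. It is precisely here that the hypothesis $b_2\geq|\sigma|+6$, i.e.\ $\min(b_2^+,b_2^-)\geq 3$, is used: it supplies enough indefinite (hyperbolic) summands in the intersection form for Freedman's disk-embedding and embedded-surgery theorems to apply for the good group $\mathbb{Z}$, so the relevant obstructions can be cancelled.

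Next I would study the exteriors. Writing $M_i = X\setminus\nu(\Sigma_i)$, each has $\pi_1=\mathbb{Z}$, and the concordance exterior $W=(X\times I)\setminus\nu(C)$ is a compact cobordism rel boundary from $M_0$ to $M_1$. I would verify that $W$ is an $h$-cobordism by computing $H_*(W,M_i;\mathbb{Z}[\mathbb{Z}])$ from the long exact sequence of the pair, using excision together with the product structure $\nu(C)\cong\nu(\Sigma)\times I$ on the removed neighborhood; this shows the inclusions $M_i\hookrightarrow W$ induce isomorphisms on $\pi_1$ and on homology with $\mathbb{Z}[\mathbb{Z}]$-coefficients, hence are homotopy equivalences by the Whitehead theorem. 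Because $\mathrm{Wh}(\mathbb{Z})=0$, the Whitehead torsion of $W$ vanishes automatically, so $W$ is in fact an $s$-cobordism.

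Finally I would promote the concordance to an isotopy. Since $\pi_1(M_i)=\mathbb{Z}$ is good, the Freedman--Quinn topological $s$-cobordism theorem yields a homeomorphism $W\cong M_0\times I$ rel $M_0$. Reassembling this product structure with the tubular-neighborhood product $\nu(C)\cong\nu(\Sigma)\times I$ produces a homeomorphism $F\colon X\times I\to X\times I$ that is the identity on $X\times\{0\}$ and carries $\Sigma_0$ to $\Sigma_1$ on the top level, i.e.\ an ambient pseudo-isotopy taking one surface to the other; invoking Quinn's isotopy (pseudo-isotopy) theorem for the good group $\mathbb{Z}$ then upgrades this to a genuine topological ambient isotopy $\Sigma_0\leadsto\Sigma_1$. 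I expect the main obstacle to be the first step, namely actually constructing the embedded concordance and checking that the middle-dimensional surgery obstructions vanish; once the concordance is in hand, the $s$-cobordism and isotopy steps are fairly formal applications of Freedman--Quinn theory, and the role of $b_2\geq|\sigma|+6$ is exactly to guarantee enough room for the embedded-surgery step to succeed.
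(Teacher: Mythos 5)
First, a point of comparison: this paper does not actually prove Theorem \ref{t:iso} --- it is imported verbatim from \cite{N}*{Theorem 7.2} --- so your proposal can only be measured against the proof given there. Your overall skeleton does match that argument: produce a locally flat concordance $C$, show the concordance exterior $W$ is an $s$-cobordism between the surface exteriors (using $\mathrm{Wh}(\mathbb{Z})=0$), apply the topological $s$-cobordism theorem for the good group $\mathbb{Z}$, and then convert the resulting pseudo-isotopy of $X$ into a genuine ambient isotopy via Quinn. (One small correction there: Quinn's pseudo-isotopy-implies-isotopy theorem is proved for compact \emph{simply connected} 4-manifolds; it applies here because $X$ is simply connected, not ``for the good group $\mathbb{Z}$.'')

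The genuine gap is your localization of the hypothesis $b_2 \geq |\sigma| + 6$. You use it only as ``enough hyperbolic summands for Freedman's disk-embedding and embedded-surgery theorems,'' but the disk embedding theorem for a good group carries no such hypothesis on the ambient intersection form; if that were the only role of the condition, your argument would run verbatim without it, and in particular would apply to the examples derived from \cite{HT} and \cite{HT2} described in this paper's introduction: null-homologous spheres in $n\mathbb{CP}^2$ (where $b_2 - |\sigma| = 0$) with complement fundamental group $\mathbb{Z}$ that are provably \emph{not} topologically isotopic to the trivial sphere, since un-surgering the trivial sphere splits off $S^1 \times S^3$ while the Hambleton--Teichner manifolds admit no such splitting. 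So some step of your outline must fail without the hypothesis, and it is this one: ensuring the concordance can be built so that $\pi_1$ of its complement is $\mathbb{Z}$. You ``verify that $W$ is an $h$-cobordism by computing $H_*(W, M_i; \mathbb{Z}[\mathbb{Z}])$,'' but that computation presupposes $\pi_1(W) \cong \mathbb{Z}$, which no homology calculation can deliver and which fails for a generic concordance; controlling this $\pi_1$ is the hard content, and the role of $b_2 \geq |\sigma|+6$ is precisely the Hambleton--Teichner condition guaranteeing that the relevant hermitian form over $\mathbb{Z}[\mathbb{Z}]$ is extended from $\mathbb{Z}$ (equivalently, that the associated $\pi_1 = \mathbb{Z}$ manifold splits off an $S^1 \times S^3$ summand), which is what lets the exteriors be matched up. As the introduction notes, the condition is \emph{necessary} for locally flat surfaces, so any sketch in which it only provides ``room for surgery'' cannot be correct as stated.
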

Note that a trivially embedded surface in any 4-manifolds will satisfy $\pi_1(X \setminus \Sigma) = \mathbb{Z}$.

One might wonder how robust these exotic embeddings are. That is, what does it take to make any of the exotically embedded topologically trivial surfaces constructed here smoothly equivalent again? In \cite{BS}, Inanc Baykur and the second author show that these tori become smoothly equivalent once one increases the genus of each of these surfaces in the most trivial possible way. Namely, tubing any one of the topologically trivial surfaces of Theorem 1.1 to a smoothly trivial torus results in a smoothly trivial surface. 

It would be interesting to know what the simplest examples of exotic embeddings are. For example, this paper provides context for the following two natural questions, which have motivated work on exotically embedded surfaces.

\begin{question}
Do there exist exotically embedded surfaces in $S^4$? In particular, is there an embedded $S^2$ that is topologically isotopic to the unknot but not smoothly isotopic to the unknot?
\end{question}

The examples in this paper can be seen as prototypes for answering this sort of question,  as answering this question in other manifolds can be seen as a first step to better understanding the $S^4$ question.  At the same time, the examples of this paper can better inform an attack on the question above. 

We conclude the introduction with the following remark:

\begin{remark}
The exotically embedded tori constructed in this paper provide examples of stably ribbon surfaces that are not ribbon. They are stably ribbon, because they stably smoothly trivial (by \cite{BS}). But they are not ribbon because as we shall see, log transforms on these tori have a different effect than log transforms on trivial tori, whereas \cite[Theorem 8.3]{N} implies that log transforms on ribbon tori are equivalent to those on trivial tori.   
\end{remark}

\textbf{Acknowledgements:} Both authors would like to thank the Max Planck Institute for Mathematics for hosting them while they worked on this project, and Danny Ruberman and Tom Mark for their comments on an early draft of this paper. We are also especially grateful to Inanc Baykur who provided extensive comments on a later version. The first author was partially supported by grant from the Simons Foundation (\#524123 to Neil R. Hoffman) during later revisions of this paper.

\section{Constructing the tori}\label{s:constructing}

Let $T$ be an embedded torus with self intersection zero in a 4-manifold $X$ such that $\pi_1(X \setminus T) = 1$.  We will not construct exotic embeddings of $T$, (any such torus is necessarily homologically essential since it will have a dual), but rather we will find exotic embeddings of nearby null-homologous tori which arise in the ``knot surgery'' construction of Fintushel and Stern (\cite{FSknot} and \cite{Fknot}). Knot surgery along torus $T$ using a knot $K\subset S^3$ is most straightforwardly defined as $X_K = (X \setminus \nu (T)) \cup (S^1\times S^3\setminus \nu (K))$ where the union is formed by taking the longitude of $K$ to the meridian of $T$ (apart from this requirement, the gluing is not, strictly speaking, well defined, and $X_K$ may depend on the gluing map, but this ambiguity does not factor into our argument below).
Fintushel and Stern proved that $X$ is homeomorphic to $X_K$ under the assumption that the complement of $T$ is simply connected, and they further proved that their Seiberg-Witten invariants are related by $SW_{X_K} = SW_X \cdotp \Delta_K(2[T])$ where $\Delta_K$ is the Alexander polynomial for $K$. Therefore, by varying $K$, one can construct infinitely many smooth structures on $X$. 

The Seiberg-Witten formula is proved by viewing knot surgery as a series of log-transforms on null-homologous tori. That is, rather than cutting out $\nu(T) = S^1 \times (S^1 \times D^2)$ and replacing it with $S^1\times S^3\setminus \nu (K)$, we can view knot surgery as a series of log-transforms in $S^1 \times (S^1 \times D^2)$ which eventually lead to $S^1\times S^3\setminus \nu (K)$. Forgetting the extra $S^1$ direction for the moment, one can go from $S^3\setminus \nu (K)$ to $(S^1 \times D^2)$, the complement of the unknot, by doing $\pm 1$ surgery along crossings of $K$ to unknot it. Crossing this whole picture with $S^1$ gives the log-transforms needed for knot surgery. It is these null-homologous tori that are needed to do the knot surgery that will turn out to be the exotically unknotted tori that we are looking for in our theorem.

\begin{figure}
\labellist
\small\hair 2pt
\pinlabel{$\gamma_K$} at 73 101
\pinlabel {{$\times S^1$}} at 153 25
\pinlabel {{$\times S^1$}} at 377 25
\pinlabel {$K$} at 232 130
\endlabellist	
\includegraphics[height=1.75in]{knotsurgery2}
	\caption{\label{f:knotsurgery} The left figure represents $\nu T$, which can be thought of as the complement of the unknot in $S^3$ crossed with $S^1$. Performing a (+1)-log transform on $\T = S^1\times \gamma_K$ gives $S^1\times (S^3\setminus \nu K)$, depicted on the right.}
	
	\end{figure}

We'll focus on the following particular situation: Suppose that $K$ is a knot of unknotting number 1, and $T$ is a torus in $X$ with trivial normal bundle and simply connected complement. Then knot surgery is the result of doing a single log transform on the null homologous torus $\T = S^1 \times \gamma_K$ in $\nu T \subset X$ (see Figure \ref{f:knotsurgery}). The proof of our theorem will show that $\T$ is topologically unknotted, but smoothly non-trivial. To determine the topological isotopy class of $\T$ we will now compute the fundamental group of $X \setminus \T$: Note that since $\pi_1(X\setminus T) = 1$, we have that the inclusion $\pi_1(\partial\nu T) \rightarrow \pi_1(X\setminus \nu T)$ is trivial. Therefore, by repeated applications of Van-Kampen's theorem, 

\begin{equation*}
\begin{aligned}
\pi_1(X \setminus \T) &= \pi_1((X\setminus \nu T) \cup (\nu T \setminus \T) \\
&=\frac{\pi_1(\nu T \setminus  \T)}{\pi_1(\partial \nu T )}\\
&=\frac{\pi_1(S^1 \times (S^3 \setminus (\nu U \cup \gamma_K)))}{\langle S^1\rangle \times \pi_1(\partial \nu U)} \\
&= \pi_1(S^3 \setminus \gamma_k) = \mathbb{Z}
\end{aligned}
\end{equation*}

The second to last line is just a change of notation, since $\nu T \setminus  \T$ is just the same thing as $S^1 \times (S^3 \setminus (\nu U \cup \gamma_K))$ where $U$ is the unknot (see Figure \ref{f:knotsurgery}). And the final equality is true because $\gamma_k$ is necessarily unknotted in $S^3$.

Already we see that this gives at least one exotically embedded torus. Specifically, $\T$ is topologically standard by Theorem \ref{t:iso}, and moreover, performing a log-transform on $\T$ will give an exotic smooth structure on $X$, whereas performing a log-transform on the standardly embedded torus, (i.e. the one that bounds a solid handlebody), will not. Therefore these tori are smoothly distinct, but by Theorem \ref{t:iso} they must be topologically isotopic.

To construct infinite families of exotic surfaces, we need to be more careful. For example suppose $K_1$ and $K_2$ are knots with associated null homologous tori $\mathcal{T}_{K_1}$ and $\mathcal{T}_{K_2}$. Then it is conceivable that one might be able to construct both $X_{K_1}$ and $X_{K_2}$ by some surgery on $\mathcal{T}_{K_1}$. In this circumstance we would not be able to distinguish  $\mathcal{T}_{K_1}$ from $\mathcal{T}_{K_2}$ as we did above. To resolve this issue, we have to look more deeply at how the Seiberg-Witten invariant changes under log-transforms on $\T$, and restrict ourselves to certain classes of knots.

\section{Smooth invariants of null-homologous tori}
The Seiberg-Witten invariant of a 4-manifold $X$ is a map $SW_X : \mathcal{S}\longrightarrow \mathbb{Z}$, where $\mathcal{S}$ is the set of isomorphism classes of \spinc structures on $X$. The \emph{basic classes} of $X$ are defined to be the \spinc structures that map to non-zero integers. It is a well known property of the Seiberg-Witten invariant that a closed 4-manifold has only a finite number of basic classes. Below, we will often not distinguish between a \spinc structure and its first Chern class or even the Poincare dual of its first Chern class.

We will distinguish our null-homologous tori by computing an invariant that is, in a technical sense clarified below, related to the Seiberg-Witten basic classes of the complement of the tori. To do this we will need to understand how the Seiberg-Witten invariant of a 4-manifold is affected by log-transforms. Suppose we are given a 4-manifold with $T^3$ boundary, e.g. $X\setminus \nu T$, and suppose $H_1(T^3) = \mathbb{Z}[a,b,c]$. Denote the log-transformed 4-manifold constructed by gluing on a $D^2\times T^2$, where $[\partial D^2]$ is glued to $[pa+qb+rc]$ as $X_T{(p,q,r)}$ or sometimes just $X_{(p,q,r)}$, and denote the core torus in the $D^2\times T^2$ part of this manifold as $T_{(p,q,r)}$.

A formula of Morgan-Mrowka-Szabo from \cite{MMS} give a formula relating the Seiberg-Witten invariants of various log-transforms:

\begin{align}  \sum_i & SW_{X_{T}(p,q,r)}(k_{(p,q,r)} + i[T_{(p,q,r)}]) = p\sum_i  SW_{X_{T}(1,0,0)}(k_{(1,0,0)} + i[T_{(1,0,0)}]) \\
 & +q\sum_i SW_{X_{T}(0,1,0)}(k_{(0,1,0)} + i[T_{(0,1,0)}]) + r\sum_i SW_{X_{T}(0,0,1)}(k_{(0,0,1)} + i[T_{(0,0,1)}]) \nonumber
 \label{e:eq1}
\end{align}

\noindent where the \spinc structures agree away from the log-transformed tori, i.e.:

\begin{align} k_{(p,q,r)}|_{X_{(p,q,r)}\setminus T_{(p,q,r)}} &= k_{(1,0,0)}|_{X_{(1,0,0)}\setminus T_{(1,0,0)}}\nonumber \\
&= k_{(0,1,0)}|_{X_{(0,1,0)}\setminus T_{(0,1,0)}} \\
&= k_{(0,0,1)}|_{X_{(0,0,1)}\setminus T_{(0,0,1)}}\nonumber. 
\end{align}

Suppose that $T_{(p,q,r)}$ is null-homologous. Then the left hand side of Equation 1 has only one term. Moreover, since $X_{(1,0,0)}$ has only a finite number of basic classes $k_{(1,0,0)} + i[T_{(1,0,0)}]$, we see that Equation 2 implies that there are only a fixed finite number of possible basic classes $k_{(p,q,r)}$ for $X_{T(p,q,r)}$, and these possibilities depend only on the choice of $T$, not on $(p,q,r)$. To put this another way, there are only a finite number of \spinc structures on $X\setminus \nu T$ that can be extended to basic classes on $X_T(p,q,r)$ when $[T_{(p,q,r)}]=0$. Therefore, the following invariant is well defined:

\begin{definition}
Let $T$ be a null-homologous torus in $X$ (defined up to smooth isotopy). Define $B(X,T)$ to be the maximum divisibility of the difference between any two basic classes of {$X_T{(p,q,r)}$}  
for any $(p,q,r)$ such that $[T_{(p,q,r)}] = 0$.

\begin{remark}
It is important to use the divisibility of the \emph{difference} of basic classes rather than just the divisibility of the basic classes because very often after performing knot surgery all of the basic classes have divisibility 1.
\end{remark}
\end{definition}

\section{Families of unknotting number one knots, and the proof of Theorem 1}
Now that we have a better understanding of the smooth invariants needed to distinguish potential infinite families of smooth tori, we can describe an explicit family of knots that will give rise to smoothly distinct $\T$. For the invariant $B(X,T)$ to be useful, we will need to find a family of knots with unknotting number $1$ whose Alexander polynomials have arbitrarily high degree. All two-bridge knots can be given in the form of Figure \ref{f:twobridge} where $a_i$ is the number of right half-twists when $i$ is odd, and left half-twists when $i$ is even. We refer to two-bridge knots using Conway's notation, $C(a_0,\ldots,a_m)$, and we note that it is well known (see \cite[$\S$12B]{BZ} for instance), that two 2-bridge knots are equivalent if and only if $[a_0,\ldots,a_m]$ and $[a'_0,\ldots,a'_{m'}]$ are continued fraction expansions of the same rational number.

\begin{proposition}[Kanenobu-Murakami \cite{KM}]\label{p:unknotprop}
A two-bridge knot has unknotting number one if and only if it can be expressed as 
$$C(b,b_1,b_2,\ldots ,b_k,\pm 2, -b_k,\ldots, -b_2,-b_1).$$
\end{proposition}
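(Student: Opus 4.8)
My plan is to pass to the double branched cover and reduce the statement to a question about Dehn surgery. Writing the two-bridge knot $K$ as the one associated to the rational number $p/q$ with continued fraction expansion $[a_0,\ldots,a_m]$, its double branched cover $\Sigma_2(K)$ is the lens space $L(p,q)$. The key input is the classical \emph{Montesinos trick}: a single crossing change on $K$ lifts to a Dehn surgery on a knot in $\Sigma_2(K)$, so unknotting $K$ with one crossing change is equivalent to realizing $L(p,q)$ as a half-integral Dehn surgery on a knot $\kappa \subset S^3 = \Sigma_2(\text{unknot})$. Thus $u(K)=1$ if and only if $L(p,q)$ arises from such a surgery, and the entire problem is translated into the study of half-integral lens-space surgeries, on the one hand, and the continued-fraction combinatorics of $p/q$, on the other.

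For the \emph{sufficiency} direction (if $K$ has the stated form, then $u(K)=1$) I would argue constructively. Given a diagram $C(b,b_1,\ldots,b_k,\pm 2,-b_k,\ldots,-b_1)$, a single crossing change in the central twist region replaces the entry $\pm 2$ by $0$. I would then invoke the standard continued-fraction identity (see \cite{BZ}) that a vanishing entry merges its two neighbors additively, since $[a,0,b,d,\ldots]=[a+b,d,\ldots]$. Applied to $[b,b_1,\ldots,b_k,0,-b_k,\ldots,-b_1]$, the central $0$ absorbs the paired terms $b_k$ and $-b_k$ into a new $0$, and iterating this cascade annihilates each symmetric pair $b_i,-b_i$ in turn, terminating at the fraction of the unknot. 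This shows $u(K)\le 1$, and since $K$ is nontrivial for the relevant parameters, $u(K)=1$.

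The \emph{necessity} direction is the substance of the proposition and where I expect the main obstacle. Assuming $u(K)=1$, the Montesinos trick gives $L(p,q)=S^3_{p/2}(\kappa)$ for some $\kappa$, and the task is to show that the arithmetic constraints forced on $(p,q)$ by the existence of such a surgery are exactly those realized by the palindromic continued fractions above. Here I would analyze the $4$-manifold $W$ obtained from the trace of the surgery (or, dually, the linear plumbing bounding $L(p,q)$) and extract constraints on its intersection and linking forms, either classically through the Goeritz form and the signature of $K$, or, in cleaner modern form, through Donaldson's diagonalization theorem, which forces the relevant lattice to embed in a diagonal one and pins the continued fraction down to a ``changemaker''-type vector. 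Matching these embedding constraints back to the continued-fraction data of $p/q$ is the delicate combinatorial heart of the argument, and carrying it out is precisely what upgrades the bare surgery description into the explicit symmetric form $C(b,b_1,\ldots,b_k,\pm 2,-b_k,\ldots,-b_1)$.
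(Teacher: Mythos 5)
First, a point of comparison: the paper does not prove this proposition at all --- it is quoted from Kanenobu--Murakami \cite{KM} and used as a black box, so your attempt must be measured against their original argument. Your opening reduction is exactly right, and it is also their starting point: by the Montesinos trick, $u(K)=1$ for the two-bridge knot $K$ with fraction $p/q$ if and only if the lens space $L(p,q)=\Sigma_2(K)$ arises as half-integral surgery $S^3_{\pm p/2}(\kappa)$ on some knot $\kappa\subset S^3$. Your sufficiency argument is also essentially complete and correct: one crossing change in the central twist region converts the entry $\pm 2$ to $0$, and the continued-fraction identity $[\ldots,a,0,c,\ldots]=[\ldots,a+c,\ldots]$ (as in \cite{BZ}) makes the palindromic pairs $b_i,-b_i$ cancel in a cascade terminating at the unknot. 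This is the easy direction and can even be seen directly in the diagram.

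The necessity direction, however, contains a genuine gap, which you yourself flag: ``extract lattice-embedding constraints via Goeritz forms or Donaldson diagonalization and match them to the continued fraction'' is a research program, not a proof. The changemaker-type analysis you gesture at is precisely the content of Greene's much later work on unknotting number one (for alternating knots), and executing it for half-integral lens-space surgeries is a substantial combinatorial undertaking that your sketch does not begin. More importantly, it is not how Kanenobu and Murakami argue. Their key input --- the missing idea in your proposal --- is the Cyclic Surgery Theorem of Culler--Gordon--Luecke--Shalen: since the surgery coefficient $\pm p/2$ is non-integral and $\pi_1(L(p,q))$ is cyclic, $\kappa$ must be a torus knot (or the unknot). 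Moser's classification of surgeries on torus knots then determines exactly which lens spaces arise from half-integral surgery (those of the form $L(2ab\pm 1, 2b^2)$, up to the usual equivalences), and an explicit continued-fraction computation identifies these pairs $(p,q)$ with the symmetric expansions $C(b,b_1,\ldots,b_k,\pm 2,-b_k,\ldots,-b_1)$. The cyclic surgery theorem thus collapses the hard classification step to known torus-knot arithmetic; without invoking it, or without actually carrying out the Donaldson-theoretic lattice analysis, your necessity direction remains unproven.
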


	\begin{figure}
\labellist
\small\hair 2pt
\pinlabel {$a_0$} at 43 18
\pinlabel {$a_1$} at 92 53
\pinlabel {$a_2$} at 146 18
\pinlabel {$a_3$} at 192 53
\pinlabel {$a_{2n-1}$} at 286 53
\pinlabel {$\ldots$} at 240 32
\endlabellist	
\includegraphics{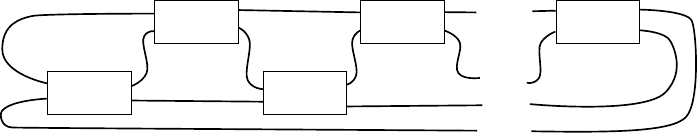}
	\caption{The two-bridge knot $C(a_0,\ldots,a_{2n-1})$.}
	\label{f:twobridge}
	\end{figure}

The following proposition of Burde-Zieschang tells us how to compute the relevant polynomial invariants:

\begin{proposition}[Burde-Zieschang {\cite[Proposition 12.23]{BZ}}]\label{p:conwayprop}
The Conway polynomial of a two-bridge knot expressed as $C(a_0,\ldots , a_{2n-1})$ has degree $\sum_{j=0}^{n-1} |a_{2i}|$.
\end{proposition}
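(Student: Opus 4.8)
The plan is to compute $\nabla_K(z)$ directly from the Conway skein relation
$$\nabla_{L_+}(z) - \nabla_{L_-}(z) = z\,\nabla_{L_0}(z),$$
organizing the computation along the rational tangle decomposition encoded by $C(a_0,\ldots,a_{2n-1})$. To each rational tangle $T$ arising as an initial segment of the plat I would attach the pair of Conway polynomials of its two standard closures, recording $v(T) = (\nabla_{N(T)},\nabla_{D(T)})$, where $N$ and $D$ denote the numerator and denominator closures. Building the plat from the inside out, each twist region $a_i$ acts on $v(T)$ by a transfer matrix that can be read off the skein relation: adding a single half-twist replaces one of the two closures by itself plus $\pm z$ times the other, leaving the second closure essentially unchanged (up to sign conventions coming from the orientations at the resolved crossing).

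With the conventions of Figure \ref{f:twobridge}, the even-indexed regions $a_{2j}$ are the horizontal twists and the odd-indexed regions are vertical. A horizontal half-twist is exactly the move whose transfer matrix carries the factor $z$ multiplying the closure of one-lower degree, so it raises the degree of the governing closure polynomial by one; applying it $|a_{2j}|$ times raises that degree by $|a_{2j}|$. A vertical twist, by the numerator--denominator duality obtained from rotating the tangle by a quarter turn, acts by the transposed type of matrix and does not raise the degree of the closure that eventually produces $K$. Summing these contributions over all regions gives the upper bound $\deg\nabla_K \le \sum_{j=0}^{n-1}|a_{2j}|$, since each horizontal crossing contributes at most one to the degree through its $z\,\nabla_{L_0}$ term while the vertical regions contribute nothing to the top degree.

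The substance of the proof is the matching lower bound, namely that no leading-term cancellation occurs. I would prove by induction on the number of twist regions the stronger invariant that throughout the construction the degrees $\deg\nabla_{N(T)}$ and $\deg\nabla_{D(T)}$ are consecutive integers and that the leading coefficient of the one of larger degree is $\pm 1$. The base case is a single twist region, where the two closures are a trivial knot/link and a $(2,k)$-torus link, and the claim is a direct computation. For the inductive step I check that each transfer matrix preserves the invariant: adding $z$ times the lower-degree entry to the higher-degree entry raises the top degree by exactly one and transports the unit leading coefficient, while the quarter-turn duality only swaps the roles of the two entries without destroying the gap-one structure. Since the leading coefficient never vanishes, the degree of the final plat closure is exactly the accumulated count of horizontal crossings, which is $\sum_{j=0}^{n-1}|a_{2j}|$.

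The hard part is precisely this bookkeeping of the leading coefficient: the skein relation controls degrees only up to possible cancellation, so the crux is to isolate an invariant robust enough to survive both generating moves together with the orientation and sign choices, and to confirm that the final plat closure selects the closure whose degree records the horizontal (even-indexed) rather than the vertical twists. Once the gap-one, unit-leading-coefficient invariant is verified to be preserved by the two moves, the degree formula follows by multiplying the transfer matrices in the order dictated by the continued fraction expansion.
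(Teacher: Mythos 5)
Your overall framework---tracking the pair $(\nabla_{N(T)},\nabla_{D(T)})$ along the continued fraction and letting each twist region act by a skein-derived transfer move---is the right kind of argument, and it is essentially how Burde--Zieschang themselves prove this statement (the paper offers no proof of its own: it quotes \cite[Proposition 12.23]{BZ} and only supplies the dictionary between Figure \ref{f:twobridge} and the 4-plat normal form). But your proof has a genuine gap exactly where you wrote ``direct computation'': the gap-one, unit-leading-coefficient invariant is false, and the per-crossing degree count is orientation-dependent in a way your sketch does not control. The Conway relation uses the \emph{oriented} smoothing, so the effect of a half-twist depends on whether the two strands of the region are coherently or antiparallel oriented, not on whether the region is horizontal or vertical. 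In a coherently oriented region one has $\nabla_k = \nabla_{k-2} \pm z\,\nabla_{k-1}$ and the degree climbs by one per crossing; in an antiparallel region the oriented smoothing caps the region (the leftover crossings become removable curls), giving $\nabla_k = \nabla_{k-2} \pm z\,\nabla_{\mathrm{cap}}$, so the degree stalls while coefficients grow: the $(2,2m)$-torus link with antiparallel orientations has $\nabla = \pm m z$ for every $m$---degree $1$ and leading coefficient $\pm m$, not a unit. Consequently your base case already fails: after a single horizontal region of $k=|a_0|\ge 3$ crossings the two closures are a $(2,k)$-torus link and an unknot, whose degrees are $k-1$ and $0$ (not consecutive) in the coherent case, and whose leading coefficient is $\pm k/2$ (not a unit) in the antiparallel case.

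There is also a bookkeeping failure that infects both your upper and lower bounds. For the figure-eight knot $C(2,2)$ the numerator-type closure after the $a_0$-region is a Hopf link of degree $1$, yet the final degree is $|a_0|=2$; so the final plat closure and the odd-indexed regions shift degrees, and naive per-region addition of degree increments would produce something like $\sum(|a_{2j}|-1)$ rather than $\sum|a_{2j}|$. Moreover the ``transposed'' transfer move for a vertical crossing, $\nabla_{D'}=\nabla_D\pm z\,\nabla_N$, does insert factors of $z$ that can later feed into $\nabla_N$ through subsequent horizontal moves, so it is simply not true at the level of the matrix product that vertical regions ``contribute nothing to the top degree''; the fact that only the even-indexed entries appear in the final degree is an artifact of how orientations and parities distribute over the normal form, and this is precisely the case analysis Burde--Zieschang carry out (tracking, by induction on the continued fraction, the degree and leading coefficient of both closures as explicit functions of the twist parameters, distinguishing knot from two-component intermediate closures). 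Until you replace your single invariant by that orientation-sensitive case analysis, neither the inequality $\deg\nabla_K \le \sum_{j}|a_{2j}|$ nor the no-cancellation lower bound is established.
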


\begin{remark}
The notation in Burde-Zieschang is different than that used in Proposition \ref{p:unknotprop}. To reconcile the conventions, the two-bridge knot diagram in Figure \ref{f:twobridge} can be converted to a 4-plat diagram as in Burde-Zieschang by pulling the inner strand on the right hand side of the figure over the outer strand. This has the effect of adding a new crossing (i.e. $a_{2n}=+1$) and adjusting $a_{2n-1}$ by $+1$.
\end{remark}

\begin{lemma}
There exists an infinite family of unknotting number one knots whose Alexander polynomials have arbitrarily high degree. 
\end{lemma}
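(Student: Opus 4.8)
The plan is to play the two preceding propositions off each other: Kanenobu--Murakami (Proposition~\ref{p:unknotprop}) hands us an explicit parametrized family of unknotting number one two-bridge knots, while Burde--Zieschang (Proposition~\ref{p:conwayprop}) lets us read the degree of the Conway polynomial, and hence of the Alexander polynomial, directly off the Conway notation. Recall that for a knot the Conway polynomial $\nabla_K(z)$ and the Alexander polynomial $\Delta_K(t)$ are interchanged by the substitution $z = t^{1/2}-t^{-1/2}$, so the $z$-degree of $\nabla_K$ equals the breadth of $\Delta_K$; consequently it suffices to produce unknotting number one two-bridge knots whose Burde--Zieschang count $\sum_j |a_{2j}|$ is unbounded.

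The first thing to notice is \emph{why} the naive guess fails. The twist knots are two-bridge of unknotting number one, yet they all have genus one, so their Alexander degree is pinned at $2$; by Proposition~\ref{p:conwayprop} this forces their growing twist parameter to sit in an \emph{odd}-indexed slot of the Conway notation, where it never enters the even-index sum. The remedy is to arrange the growing complexity at the even-indexed slots while staying inside the Kanenobu--Murakami normal form.

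Concretely, I would take the family
$$ K_k \;=\; C(\,\underbrace{2,2,\dots,2}_{k+1},\,2,\,\underbrace{-2,\dots,-2}_{k}\,), $$
which is exactly the Kanenobu--Murakami form with $b = b_1 = \dots = b_k = 2$ and central entry $+2$; by Proposition~\ref{p:unknotprop} every $K_k$ then has unknotting number one. It has $2k+2 = 2n$ Conway entries, all of absolute value $2$, so the $k+1$ even-indexed slots yield, in the normalization of Proposition~\ref{p:conwayprop},
$$ \deg \nabla_{K_k} \;=\; \sum_{j=0}^{k} |a_{2j}| \;=\; 2(k+1). $$
Thus the Alexander polynomials of the $K_k$ have breadth $2(k+1)$, which is unbounded. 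Since the Alexander polynomial is a knot invariant and these degrees are pairwise distinct, the $K_k$ realize infinitely many distinct knot types, giving the required family of arbitrarily high Alexander degree.

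The step that demands care -- and the only genuine obstacle -- is reconciling the conventions of the two cited propositions so that the even-index degree formula is applied to the correctly normalized Conway expression of the Kanenobu--Murakami knot; this is precisely the content of the plat-conversion remark preceding Proposition~\ref{p:conwayprop} (adding the crossing $a_{2n}=+1$ and adjusting $a_{2n-1}$). Once the normalizations are matched, the parity sanity check that $\deg\nabla_{K_k}$ comes out even, as it must for a knot, confirms the bookkeeping, and the remaining claims -- that each $K_k$ is a genuine knot rather than a two-component link, and that distinct degrees force distinct knots -- are immediate.
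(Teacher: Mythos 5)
Your proof is correct and takes essentially the same route as the paper: the paper's own proof is precisely the two-line observation that combining Propositions~\ref{p:unknotprop} and~\ref{p:conwayprop} produces unknotting number one two-bridge knots whose Conway polynomials have arbitrarily high degree, followed by the relation $\nabla(t-t^{-1})=\Delta(t^2)$. Your explicit family $C(2,\dots,2,2,-2,\dots,-2)$, with the accompanying checks (applying the even-index degree count, verifying the result is a knot rather than a link, and matching the plat conventions), simply instantiates and makes rigorous what the paper leaves implicit --- a choice that is in fact robust, since with all entries of absolute value $2$ the degree count is insensitive to any indexing ambiguity in the normal form.
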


\begin{proof}
Combining Propositions \ref{p:unknotprop} and \ref{p:conwayprop} shows that there exists an infinite family of two-bridge knots of unknotting number one such that the Conway polynomial has arbitrarily high degree. The lemma is thus immediate from the fact that the Conway polynomial is related to the Alexander polynomial by the formula $\nabla (t-t^{-1}) = \Delta (t^2)$.
\end{proof}

\begin{proof}[Proof of Theorem \ref{t:main}]
Let $\{ K_j \}$ be a sequence of knots of unknotting number 1 such that the degree of their Alexander polynomials goes to infinity, and let $\TTj$ be the associated (topologically trivial) tori from Section \ref{s:constructing}. 

Since there is a log-transform on $\TTj$ that gives $X_{K_j}$, we have that

\begin{align*}
\lim_{j\to \infty}  B(X,\TTj) \geq& \lim_{j\to \infty} \left( \begin{array}{ll}
\text{max divisibility of the difference} \\
\text{between any two basic classes of } X_{K_j}
\end{array} \right) \\
 \geq& \lim_{j\to \infty} 4deg(\Delta_{K_j}) = \infty. 
\end{align*}

The second inequality follows because the knot surgery formula, $$SW_{X_K} = SW_X \cdotp \Delta_K(2[T]),$$ allows us to determine the basic classes of $X_K$ from those of $X$. Specifically, if $\kappa$ is a basic class of $X$, and the degree of $\Delta_{K_j}$ is $n$, then $X_{K_j}$ has $\kappa + 2n[T]$ and $\kappa - 2n[T]$ as basic classes (among others), and the divisibility of the difference of this pair of basic classes is $4n$, which serves a a lower bound for $B(X,\TTj)$.
Therefore, after possibly passing to a subsequence of the $\{X_{K_j}\}$ there are an infinite number of the $\TTj$ that are smoothly distinguished by their $B$ invariant.
 \end{proof}

\end{document}